\newtheorem{theorem}{Theorem}
\newtheorem{lemma}{Lemma}
\newtheorem{definition}{Definition}
\newtheorem{conjecture}{Conjecture}
\DeclareMathOperator{\vol}{Vol}
\newcommand{\Z}{\mathbb Z}
\newcommand{\R}{\mathbb R}
\newcommand{\ext}{\mathrm{ext}}
\newcommand{\dd}{\mathrm{d}}
\title{The layer number of grids}
\author{Gergely Ambrus}
\address{
Gergely Ambrus\\
Alfr\'ed R\'enyi Institute of Mathematics, Budapest, Hungary}
\email[G. Ambrus]{\texttt ambrus@renyi.hu}
\author{Alexander Hsu}
\address{
Alexander Hsu\\
Department of Mathematics, Reed College, Portland, OR, USA}
\email[A. Hsu]{\texttt hsua@reed.edu}
\author{Bo Peng}
\address{
Bo Peng\\
Department of Mathematics, Carleton College, Northfield, MN, USA}
\email[B. Peng]{\texttt pengg@carleton.edu}
\author{Shiyu Yan}
\address{
Shiyu Yan\\
Department of Mathematics, Carleton College, Northfield, MN, USA}
\email[S. Yan]{\texttt yanj@carleton.edu}
\begin{document}

\begin{abstract} The peeling process is defined as follows: starting with a finite point set $X \subset \R^d$, we repeatedly remove the set of vertices of the convex hull of the current set of points. The number of peeling steps needed to completely delete the set $X$ is called the {\em layer number} of $X$. In this paper, we study the layer number of the $d$-dimensional integer grid $[n]^d$. We prove that for every $d \geq 1$, the layer number of $[n]^d$ is at least $\Omega\left(n^\frac{2d}{d+1}\right)$. On the other hand, we show that for every $d\geq 3$, it takes at most $O(n^{d - 9/11})$ steps to fully remove $[n]^d$. Our approach is based on an enhancement of the method used by Har-Peled and Lidick\'{y} \cite{HL13} for solving the 2-dimensional case.
\end{abstract}

\thanks{Research of the first author was supported by NKFIH grants PD-125502 and KKP-133819.  }

\keywords{Layer numbers, peeling process, convex hull, integer points, geometric processes.}

\subjclass[2020]{52C45(primary), 68U05(secondary), 52A05 }

\maketitle

\section{Introduction}

\subsection{History}
Consider  a finite point set $X \subset \R^d$ with $ d \geq 1$. Define the {\em peeling process} as follows: in every step, we remove the set of vertices of the convex hull of the previous iteration.
The sets of points removed  in each step form the {\em convex layers} of $X$, while the total number of steps needed to completely delete $X$ is the {\em layer number} of $X$, which we denote by $\tau(X)$.

The convex layer decomposition of planar sets was first studied by Eddy~\cite{E82} and Chazelle~\cite{C85} from the algorithmic point of view. The latter article gave an $O(n \log n)$ running time algorithm for computing the convex layers of an $n$-element planar point set. Therefore, layer numbers may be computed quickly and efficiently.

Almost 20 years later, Dalal~\cite{D04} determined the expected layer number of random point sets. He proved that if $X$ is a set of $n$ random points chosen independently from the $d$-dimensional unit ball, then $\mathbb{E}(\tau(X)) = \Theta(n^{2/(d+1)})$.

Let $[n]^d=\{1,\ldots,n\}^d$ be the $n\times n\times\ldots\times n$ $d$-dimensional integer grid.  Har-Peled and Lidick{\' y}~\cite{HL13} studied the peeling process of the planar set $[n]^2$. They proved the asymptotically sharp bound $\tau([n]^2)=\Theta(n^{4/3})$, which provides an example when  random points and lattice points behave similarly (see the survey article of B\'{a}r\'{a}ny~\cite{B08} for such phenomena). It is natural to believe that this analogy also holds for higher dimensional cases.

\begin{conjecture}\label{conj1}
The layer number of the grid $[n]^d$ satisfies $\tau([n]^d) = \Theta(n^{2 d/(d+1)})$ for every $d \geq 1$.
\end{conjecture}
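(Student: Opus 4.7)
My plan to attack Conjecture~\ref{conj1} splits into the standard lower bound and the substantially harder matching upper bound; the latter is the real content of the conjecture.

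\textbf{Lower bound.} I would invoke Andrews's theorem on lattice polytopes, which guarantees that any convex polytope with vertices drawn from $[n]^d$ has at most $C_d\, n^{d(d-1)/(d+1)}$ vertices. At every step of the peeling process the surviving set $X_k$ lies inside $[n]^d$, so at most $C_d\, n^{d(d-1)/(d+1)}$ lattice points are removed per step. Since $|[n]^d| = n^d$, telescoping yields
\begin{equation*}
\tau([n]^d) \;\geq\; \frac{n^d}{C_d\, n^{d(d-1)/(d+1)}} \;=\; \Omega\!\left(n^{2d/(d+1)}\right),
\end{equation*}
recovering the bound already proved in the paper.

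\textbf{Upper bound, strategy.} For the matching $O(n^{2d/(d+1)})$ upper bound I would run a region-shrinking induction: after $k$ peels, maintain a convex body $K_k \supseteq \mathrm{conv}(X_k)$ together with a vector of cap depths $(t_k^{(u)})_u$ indexed by directions $u$ on a sufficiently fine net of $S^{d-1}$, such that every cap of $[1,n]^d$ cut off in direction $u$ at depth $t_k^{(u)}$ contains no surviving lattice point. The goal is a quantitative cap lemma in the spirit of Macbeath--B\'ar\'any showing that each peel advances every $t_k^{(u)}$ by a definite amount, because one peel must strip a complete lattice cross-section parallel to each near-extremal facet of $\mathrm{conv}(X_k)$. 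Calibrating the exponents against Andrews's threshold (the one already exploited in the lower bound) should give that the total number of peels needed to drive $t_k^{(u)}$ to $\Theta(n)$ uniformly in $u$ is $O(n^{2d/(d+1)})$.

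\textbf{Main obstacle.} The central difficulty, and precisely the reason the paper falls short of the conjecture with $O(n^{d-9/11})$ for $d\geq 3$, is the interaction of caps near the low-dimensional faces of $[n]^d$. Along the interior of a single facet the analysis essentially reduces to the planar argument of Har-Peled and Lidick\'y~\cite{HL13} lifted trivially over $\R^{d-1}$, and there the right exponent is visible. But near edges, ridges, and especially the $2^d$ corners, several families of caps overlap: a single peel must simultaneously service many of them, and naive union-bound bookkeeping double-counts work and loses in the exponent. Handling this overlap sharply --- perhaps through a dyadic decomposition of caps whose contributions geometrically sum to the Andrews rate, or through a local affine change of coordinates reducing each corner to a lower-dimensional weighted peeling problem --- is the genuinely new ingredient the plan requires, and is where I expect almost all of the effort to go.
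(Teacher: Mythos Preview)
The statement you are addressing is a \emph{conjecture}; the paper does not prove it, and neither does your proposal. Your lower bound is correct and is exactly the paper's Theorem~1 (Andrews's bound on $f_0$ of a lattice polytope, divided into $n^d$). That half is fine and matches the paper verbatim.

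For the upper bound, what you have written is not a proof but an outline with an explicitly acknowledged missing ingredient. Two concrete issues. First, the claim that ``one peel must strip a complete lattice cross-section parallel to each near-extremal facet of $\mathrm{conv}(X_k)$'' is false as stated: a peel removes only the vertices of $\mathrm{conv}(X_k)$, and a supporting hyperplane whose intersection with $X_k$ is a positive-dimensional face loses only its extreme points, not the whole cross-section. This is precisely the category-$1$ (and higher) obstruction the paper isolates after Theorem~2: a direction can stay non-vertex-supporting for many steps because an exposed edge of length $\ell$ needs about $\ell/2$ peels to clear. Your cap-depth variables $t_k^{(u)}$ therefore need not advance at every step, and without a quantitative bound on how often they stall the scheme gives nothing. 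Second, your diagnosis of the bottleneck as ``interaction of caps near the corners of $[1,n]^d$'' does not match the actual obstruction identified in the paper, which is the possible presence of long edges on the evolving layer polytope $P_i$ (not a boundary-of-the-cube phenomenon at all); the paper's own suggested line of attack is to show that $P_i$ rapidly acquires only short edges.

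In short: the lower bound is done and agrees with the paper; the upper bound remains open, your sketch does not close it, and the specific mechanism you propose for per-step progress is incorrect without substantial repair.
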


If true, the above asymptotic estimate would match the result of Dalal~\cite{D04} on random point sets.  Conjecture~\ref{conj1} initiated our current research project. We study the layer number of the higher dimensional grids $[n]^d$, with particular focus on the 3-dimensional case.  Although we are not able to reach the asymptotically sharp estimate of Conjecture~\ref{conj1}, our results provide the first non-trivial estimates for the layer number of $[n]^d$ with $d\geq 3$.

 Har-Peled and Lidick{\' y} \cite{HL13} also noted that the convex layers of the planar $n \times n$ grid seem to converge to circles. This observation has been given an experimental verification by Eppstein, Har-Peled, and Nivasch \cite{EHN20}, who established an interesting connection between the planar grid peeling process and the affine curve-shortening flow. Further algorithmic applications of the peeling process were given in \cite{ABCR15}.

\subsection{Definitions}
First, we rigorously define the peeling process. Starting with a finite point set $X= X_0$ of $\R^d$, let us recursively define $X_i = X_{i-1} \setminus \ext(X_{i-1})$ for each $i \geq 1$, where $\ext(Y)$ stands for the set of extreme points of $Y$ (that is, the set of vertices of the convex hull of $Y$). The smallest index $i$ for which $X_i = \emptyset$ is called the {\em layer number of $X$}, denoted by $\tau(X)$ (this is sometimes also referred to as the {\em convex depth of $X$}). The point sets removed in each step are called the {\em convex layers} of $X$.

In the article, we are going to study the peeling process of $[n]^d$. For a given $d$ and $n$, set $X = X_0$ to be $[n]^d$, and denote by $P_i$ the convex hull of $X_i$ introduced above. Then $P_i$ is a convex lattice polytope, which is going to be referred to as the {\em $i$th convex layer polytope of $[n]^d$}. Naturally, the polytopes $P_i$ form a nested sequence, starting from the cube $[1,n]^d$, and  shrinking to the empty set.
We also note that since the initial set $[n]^d$ is symmetric, each $P_i$ is symmetric as well.

\medskip

The subsequent arguments are based on lattice geometric observations. A core notion is the following.

\begin{definition}[Primitive vector]
An integer vector $(x_1,x_2,\ldots,x_d) \in \Z^d$ is \textbf{primitive} if not all coordinates are~$0$ and all coordinates are coprime, meaning the greatest common divisor of all $x_i$ is $1$. Geometrically, this translates to the condition that the segment connecting the origin and the primitive point does not contain any other integer points.
\end{definition}
For each positive integer $\mu$, define $V_\mu$ to be the set of primitive vectors with each coordinate contained in the interval $[0, \mu]$.

\begin{definition}[Direction of category k]\label{def2}
Fix a step of the peeling process of $[n]^d$ with the corresponding convex layer polytope $P$. Assume that $P$ is non-degenerate.  For each vector $v \in V_\mu$, there exist two supporting hyperplanes to $P$ that are orthogonal to $v$. By symmetry, these two hyperplanes have isomorphic intersections with $P$. If these intersections are $k$-dimensional faces of $P$, where  $0 \leq k \leq d-1$, we define $v$ to be of \textbf{category} $k$ in this given peeling step.
\end{definition}

By the standard notation, for a convex polytope $P \subset \R^d$, let $f_k (P)$ denote the number of $k$-dimensional faces of $P$, for each $k \in [0,d]$.
For all other common definitions regarding convex sets we refer to the monograph of Schneider~\cite{Sch93}.

All asymptotic notations in the article are meant for a fixed $d$ while $n$ converges to $\infty$, with the implied constants depending on $d$.

\section{Bounds on the layer number}
\subsection{A lower bound on $\tau([n]^d)$}
\begin{theorem}
The layer number of a $d$-dimensional grid $[n]^d$ is bounded below by
$\Omega\left(n^\frac{2d}{d+1}\right)$.
\end{theorem}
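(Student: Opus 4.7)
The plan is to bound the number of points removed at each peeling step via a classical result on lattice polytopes, then divide the total number of points by this bound.

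More precisely, let $P_i$ denote the $i$-th convex layer polytope, as defined in the excerpt, and let $f_0(P_i)$ be its number of vertices. Since the points removed in the $i$-th peeling step are exactly the vertices of $P_i$, we have
\[
\sum_{i=0}^{\tau([n]^d)-1} f_0(P_i) \;=\; n^d.
\]
Each $P_i$ is a convex lattice polytope contained in the cube $[1,n]^d$, hence $\vol(P_i) \leq (n-1)^d = O(n^d)$. The main tool is then the theorem of Andrews, which asserts that a convex lattice polytope $P \subset \R^d$ satisfies
\[
f_0(P) \;=\; O\!\left(\vol(P)^{(d-1)/(d+1)}\right),
\]
with an implied constant depending only on $d$. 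Applying this to every layer polytope yields
\[
f_0(P_i) \;=\; O\!\left(n^{d(d-1)/(d+1)}\right)
\]
uniformly in $i$.

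Substituting this uniform bound into the telescoping identity and solving for $\tau([n]^d)$ gives
\[
\tau([n]^d) \;\geq\; \frac{n^d}{O\!\left(n^{d(d-1)/(d+1)}\right)} \;=\; \Omega\!\left(n^{\,d - d(d-1)/(d+1)}\right) \;=\; \Omega\!\left(n^{2d/(d+1)}\right),
\]
which is the desired bound.

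There is essentially no serious obstacle here: Andrews' theorem does all the work, and one only needs the mild observations that each $P_i$ is a lattice polytope (its vertices are lattice points, since they lie in $[n]^d$) and that the nesting $P_i \subseteq [1,n]^d$ gives a uniform volume bound. The sole subtlety worth flagging is that Andrews' estimate requires the polytope to be genuinely $d$-dimensional; for the last few degenerate layers this can be handled either by a direct induction on dimension or by absorbing them into the asymptotic constant, since the layers eventually collapse to lower-dimensional lattice sets whose peeling is controlled by the $(d-1)$-dimensional case.
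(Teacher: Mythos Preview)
Your proof is correct and essentially identical to the paper's: both apply Andrews' bound $f_0(P)=O(\vol(P)^{(d-1)/(d+1)})$ to each layer polytope, use $\vol(P_i)\le n^d$, and divide the total point count $n^d$ by the resulting per-step bound. Your write-up is slightly more explicit about the telescoping identity and the degenerate-layer caveat, but the argument is the same.
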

\begin{proof}
G.E. Andrews \cite{Andrews} proved that for any convex lattice polytope $P$,
\[
f_0(P) \leq O\left( \vol(P)^\frac{d-1}{d+1} \right),
\]
and this bound is sharp.

Consider the peeling process of $[n]^d$ with the corresponding convex layer polytopes $P_i$. For each~$i$,
the volume of $P_i$ is at most $n^d$. Thus, the upper bound on the number of vertices of each layer is~$(n^d)^\frac{d-1}{d+1}$. Hence, the number of vertices removed in each peeling step is at most $O\big(n^\frac{d^2-d}{d+1}\big)$, which yields the lower bound on the layer number
\[
\tau\left([n]^d\right) \geq \frac{n^d}{O\left(n^\frac{d^2-d}{d+1}\right)}=\Omega\left(n^\frac{2d}{d+1}\right).
\qedhere
\]
\end{proof}

\noindent
Conjecture~\ref{conj1} states that this lower bound is tight.

\subsection{Upper bounds on $\tau([n]^3)$}

The subsequent arguments are based on the approach of Har-Peled and Lidick\'{y} \cite{HL13} for studying the planar case of Conjecture~\ref{conj1}.

\begin{lemma}
Let $H$ be a hyperplane determined by $d$ affinely independent points of $[n]^d$. There exists a primitive vector, with each coordinate bounded by $O(n^{d-1})$, normal to $H$.
\end{lemma}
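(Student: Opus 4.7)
The strategy is to produce an explicit integer vector normal to $H$ via the $(d-1)$-fold generalized cross product of edge vectors, bound its coordinates through a determinant estimate, and then divide by the greatest common divisor to achieve primitivity. First I would fix the $d$ affinely independent spanning points $p_1,\ldots,p_d\in[n]^d$ of $H$ and pass to the edge vectors $v_i=p_{i+1}-p_1\in\Z^d$ for $i=1,\ldots,d-1$. Each entry of each $v_i$ lies in $\{-(n-1),\ldots,n-1\}$, and the $v_i$'s are linearly independent because the $p_j$'s are affinely independent; the normal direction to $H$ is then the one-dimensional orthogonal complement of their span.

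Let $M$ be the $(d-1)\times d$ integer matrix with rows $v_1,\ldots,v_{d-1}$, and for each $k\in\{1,\ldots,d\}$ let $M_k$ be the $(d-1)\times(d-1)$ minor obtained by deleting the $k$-th column. Define $w=(w_1,\ldots,w_d)\in\Z^d$ by $w_k=(-1)^{k+1}\det(M_k)$, i.e., the Laplace expansion of the formal determinant with a symbolic row $(e_1,\ldots,e_d)$ on top of $M$. For every $i$, the dot product $w\cdot v_i$ equals the determinant of the $d\times d$ matrix obtained by prepending the row $v_i$ to $M$, which vanishes since that matrix has a repeated row. Hence $w$ is orthogonal to $H$, and $w\neq 0$ because $M$ has full row rank, so at least one of its minors is nonzero.

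Finally, each $\det(M_k)$ is the determinant of a $(d-1)\times(d-1)$ integer matrix whose entries have absolute value at most $n$; by the Leibniz expansion (or Hadamard's inequality), $|w_k|\leq(d-1)!\,n^{d-1}=O(n^{d-1})$, with the implicit constant depending only on $d$. Setting $g=\gcd(w_1,\ldots,w_d)\geq 1$, the vector $w/g\in\Z^d$ is primitive and satisfies $|w_k/g|\leq|w_k|=O(n^{d-1})$, as required. No step in this argument is substantive; the only point requiring mild care is the nonvanishing of $w$, which is precisely where the affine independence hypothesis on the spanning points of $H$ is used.
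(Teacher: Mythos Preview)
Your argument is correct and follows essentially the same route as the paper: form the edge vectors from the $d$ given lattice points, take the generalized cross product (equivalently, the signed maximal minors of the $(d-1)\times d$ matrix of edge vectors) to obtain an integer normal, bound each coordinate by $O(n^{d-1})$ via a determinant estimate, and divide out the gcd to obtain a primitive vector. Your write-up is in fact slightly more careful than the paper's in two places: you explicitly justify $w\neq 0$ from the full row rank of $M$, and your bound $(d-1)!\,n^{d-1}$ for the minors is the correct one (the paper writes $(d-1)\,n^{d-1}$, which is too small in general, though harmless for the $O(n^{d-1})$ conclusion).
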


\begin{proof}
Let the $d$ lattice points  be denoted by $w_1, \ldots, w_d$. Consider the vectors $\mathbf{v}_i = w_i - w_d$ for each $1 \leq i \leq d-1$.  We may find a vector $u$ normal to $H$ by computing the generalized cross product of the vectors $\mathbf{v}_1, \ldots, \mathbf{v}_{d-1}$:
     \[{ u =\displaystyle \bigwedge (\mathbf {v} _{1},\dots ,\mathbf {v} _{d-1})={\begin{vmatrix}v_{1}^1&\cdots &v_{1}^d\\\vdots &\ddots &\vdots \\v_{d-1}^1&\cdots &v_{d-1}^d\\\mathbf {e} _{1}&\cdots &\mathbf {e} _{d}\end{vmatrix}}}\]
where $(\mathbf {e}_1, \ldots, \mathbf {e}_d)$ is the standard basis of $\R^d$, and each $\mathbf{v}_i$ is of the form $\mathbf{v}_i = (v_i^1, \ldots, v_i^d)$. The above formula implies that the coordinates of $u$ are all integers. Let $\tilde{u}$ be the unique primitive vector contained in the segment $\overline{0\, u}$. Then $\tilde{u}$ is also normal to $H$. In order to estimate the coordinates of $\tilde{u}$, we calculate the above determinant through Laplace expansion. The coefficient of each $\mathbf{e}_i$ is the determinant of the $(d-1) \times (d-1)$ matrix obtained by deleting the last row and $i$th column, whose absolute value is at most $(d-1) n^{d-1}$.  Thus, the norm of $u$ is bounded above by $\sqrt{d}(d-1) n^{d-1}=O(n^{d-1})$.
\end{proof}

For any nonzero vector $v \in \Z^d$, let $H_v$ denote the set of hyperplanes orthogonal to $v$ which contain at least one point of $[n]^d$.
\begin{lemma} \label{lem2}
Given a primitive vector $v \in V_\mu$, the number of hyperplanes with normal vector $v$ that intersect $[n]^d$ is bounded by $|H_v|\leq dn\mu$.
\end{lemma}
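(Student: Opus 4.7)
The plan is to parametrize hyperplanes orthogonal to $v = (v_1, \ldots, v_d)$ by a single scalar, and then count how many values of that scalar can actually arise from points of $[n]^d$. Every hyperplane with normal $v$ is of the form $\{x \in \R^d : v \cdot x = c\}$ for a unique $c \in \R$, so two such hyperplanes coincide if and only if their constants agree. Hence $|H_v|$ equals the number of distinct values attained by the linear functional $x \mapsto v \cdot x$ on the set $[n]^d$.

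Next I would estimate this count directly. For any $x = (x_1, \ldots, x_d) \in [n]^d$, the value $v \cdot x = \sum_{i=1}^d v_i x_i$ is an integer (since $v \in \Z^d$), and since each $v_i \in [0, \mu]$ and $x_i \in \{1, \ldots, n\}$, it lies in the interval $\bigl[\sum_i v_i, \, n \sum_i v_i\bigr]$. The number of integers in this interval is at most $(n-1)\sum_i v_i + 1$. Using the crude bound $\sum_i v_i \leq d\mu$, we obtain $|H_v| \leq (n-1)d\mu + 1 \leq d n \mu$, as claimed; absorbing the ``$+1$'' into $dn\mu$ is harmless since $d, n, \mu \geq 1$.

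I do not foresee a serious obstacle here: the argument is a direct one-dimensional counting estimate. It is worth observing that the primitivity of $v$ is not actually used in the upper bound — only the non-negativity of the entries of $v$ and the coordinate bound $v_i \leq \mu$ enter. Primitivity will matter elsewhere in the paper (e.g.\ for identifying unique normal directions to lattice faces), but for this lemma it plays no role.
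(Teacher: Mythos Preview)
Your proof is correct and follows essentially the same approach as the paper: both parametrize hyperplanes orthogonal to $v$ by the constant $c$ in $v\cdot x = c$, observe that $c$ must be an integer lying in a range of length at most $dn\mu$, and conclude. Your bookkeeping is marginally tighter (using $\sum_i v_i$ rather than $d\max_i v_i$), and your remark that primitivity is unused in the bound is accurate and worth noting.
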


\begin{proof}
Fix a primitive vector $v = (a_1,a_2,...,a_d) \in V_{\mu}$. Consider a hyperplane $H$ orthogonal to $v$ with its defining equation $a_1x_1+...+a_dx_d=c$. Assume that $H$ contains a point $(x_1, \ldots, x_d) \in [n]^d$.
Since $x_i \in \mathbb{Z}$ and $1 \leq x_i \leq n$ for every $i$, and $0 \leq a_i \leq \mu$ for every $i$,  the value of c is bounded from above by $d\, n  \max(a_i) \leq dn\mu$, while being strictly positive. Since $c$ is an integer, this implies the desired bound.
\end{proof}

\begin{lemma} \label{lem3}
For every $d\geq 3$, $|V_m|= \Theta(m^d)$ holds with the implied constant depending on $d$.
\end{lemma}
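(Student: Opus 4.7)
The plan is to establish matching $O(m^d)$ and $\Omega(m^d)$ bounds on $|V_m|$. The upper bound is immediate: every vector of $V_m$ has coordinates in $\{0, 1, \ldots, m\}$, so $|V_m| \le (m+1)^d = O(m^d)$, with no further work required.

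For the lower bound, I will produce an explicit subfamily of $V_m$ of size $\Omega(m^d)$ by forcing primitivity through just two of the coordinates. Consider vectors $v = (v_1, \ldots, v_d)$ with $(v_1, v_2) \in \{1, \ldots, m\}^2$ coprime and $v_3, \ldots, v_d \in \{0, \ldots, m\}$ arbitrary. Since $\gcd(v_1, v_2) = 1$ already forces $\gcd(v_1, \ldots, v_d) = 1$, every such $v$ belongs to $V_m$. Letting $C(m)$ denote the number of coprime pairs in $\{1, \ldots, m\}^2$, this gives
$$|V_m| \ge C(m) \cdot (m+1)^{d-2},$$
and the task reduces to proving $C(m) = \Omega(m^2)$. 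For this, I will bound the non-coprime pairs from above by a union bound over divisors: for each $g \ge 2$, the number of pairs $(a,b) \in \{1,\ldots,m\}^2$ with $g \mid a$ and $g \mid b$ equals $\lfloor m/g\rfloor^2$, so
$$m^2 - C(m) \le \sum_{g=2}^{m} \left\lfloor \frac{m}{g} \right\rfloor^2 \le m^2 \sum_{g=2}^{\infty} \frac{1}{g^2} = m^2\bigl(\zeta(2) - 1\bigr).$$
Since $\zeta(2) = \pi^2/6 < 2$, we obtain $C(m) \ge (2 - \pi^2/6)\,m^2 = \Omega(m^2)$, and the desired bound $|V_m| = \Omega(m^d)$ follows.

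The main (and essentially only) obstacle is the lower bound $C(m) = \Omega(m^2)$ on the number of coprime pairs in a square, and this boils down to the elementary inequality $\zeta(2) < 2$. No heavier number-theoretic machinery is needed; a full M\"obius inversion on the identity $(m+1)^d - 1 = \sum_{g=1}^m |V_{\lfloor m/g\rfloor}|$ would yield the sharp asymptotic $|V_m| \sim m^d/\zeta(d)$, but such precision is unnecessary for the $\Theta(m^d)$ statement claimed by the lemma.
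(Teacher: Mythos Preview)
Your proof is correct and takes a genuinely different route from the paper's. The paper expresses $|V_m|$ via Jordan's totient function $J_{d-1}$, unwinds the M\"obius sum, applies Faulhaber's formula, and ultimately extracts the sharp leading term $m^d/(d\,\zeta(d))$; this is considerably heavier machinery than the lemma actually requires. You instead force primitivity through the first two coordinates and reduce everything to the single inequality $\zeta(2)<2$, which is both shorter and more transparent. What the paper's approach buys is the precise asymptotic constant (useful if one later wants quantitative control on the implied constants in Theorems~\ref{thm2} and~\ref{thm3}); what your approach buys is a two-line argument that needs no arithmetic functions at all, and it even works verbatim for $d=2$.
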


\begin{proof}
Avoiding any conflicts with the rest of the paper, for this proof exclusively, let $\mu$ denote the M{\"o}bius function defined in \cite[pp. 234]{HW65}. Introduce Jordan's totient function $J_r(k)$ which counts the $r$-tuples of positive integers all less than or equal to $k$ that form a coprime $(r+1)$-tuple together with $k$. This is a generalization of Euler's totient function, which is given by $J_1$.

Results from Andrica and Piticari \cite{AP03} give us  \[J_r(k)=k^r\sum_{a|k}\frac{\mu(a)}{a^r}=\sum_{a|k} \left(\frac{k}{a}\right)^r\mu(a)=\sum_{aa'=k} (a')^r\mu(a).\]
Now,
\begin{align*}
|V_m|&=\sum_{i=1}^m J_{d-1}(i) = \sum_{aa'\leq m} (a')^{d-1}\mu(a)\\
&=\sum_{a=1}^m \mu(a)\sum_{a'=1}^{\lfloor m/a \rfloor} (a')^{d-1}\\
&=\sum_{a=1}^m \mu(a)\left(\frac{\lfloor m/a \rfloor^d}{d}+\frac{1}{2}
\left\lfloor \frac{m}{a} \right \rfloor^{d-1}+\sum_{i=2}^{d-1}\frac{B_i}{i!}p^{\,\underline{i-1}}\left\lfloor \frac{m}{a} \right \rfloor^{(d-1)-i+1}\right) = (*)
\end{align*}
by Faulhaber's Formula, where $B_i$ denotes the $i$th  Bernoulli number, and $p^{\,\underline{i-1}}=\frac{p!}{(p-i+1)!}$\,. Continuing the above chain of equalities,
\begin{align*}
(*)&=\frac{1}{d}\sum_{a=1}^m \mu(a)\left( \left \lfloor \frac{m}{a} \right \rfloor^d+O \left ( \left \lfloor \frac{m}{a} \right \rfloor^{d-1} \right ) \right )\\
&\geq \frac{1}{d \, 2^d}\sum_{a=1}^m \mu(a)\left( \left(\frac{m}
{a}\right )^d+O\left(\left(\frac{m}{a}\right)^{d-1}\right)\right) \\
&=\frac{m^d}{d \, 2^d} \left(\sum_{a=1}^m \frac{\mu(a)}{a^d} \right) + O\left(m^{d-1}\sum_{a=1}^\infty\frac{1}{a^{d-1}}\right)
\\
&=\frac{m^d}{d \, 2^d} \left(\sum_{a=1}^\infty \frac{\mu(a)}{a^d} \right) -  \frac{m^d}{d \, 2^d} \sum_{a=m+1}^\infty \frac{\mu(a)}{a^d} + O\left(m^{d-1}\right) \\
& \geq \frac{m^d}{d \, 2^d \, \zeta(d)}  - \frac{m^d}{d \, 2^d} \int_{m}^\infty \frac {1}{x^d} \, \dd x + O\left(m^{d-1}\right)
\\
&=\frac{m^d}{d \, 2^d \, \zeta(d)} -O(m)+O\left( m^{d-1} \right)\\
&=\frac{m^d}{d\zeta(d)} +O(m^{d-1}).
\qedhere
\end{align*}

\end{proof}

After the necessary preparations,  we are now able to prove a nontrivial upper bound on the layer number of higher dimensional grids. Here comes our first estimate for the $d=3$ case.

\begin{theorem}\label{thm2}
The number of steps needed to peel away $[n]^3$ is at most $O(n^{9/4})$.
\end{theorem}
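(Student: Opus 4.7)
The plan is to split the peeling steps into two classes based on the vertex count of the current layer polytope. Fix parameters $\mu$ and a threshold $k$ (both to be chosen at the end), and call a step \emph{fat} if $f_0(P_i) \geq k$ and \emph{thin} otherwise. Since every lattice point of $[n]^3$ is removed exactly once, as a vertex of some $P_i$, we have $\sum_i f_0(P_i) = n^3$, so the number of fat steps is at most $n^3/k$.

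For the thin steps I would exploit the category framework from Definition~\ref{def2}. The key monotonicity observation is that when $v \in V_\mu$ is of category $0$ at step $i$, the support value $h(P_i, v) = \max_{x \in P_i} v \cdot x$ must strictly decrease from step $i$ to step $i+1$: the unique maximizer is a vertex, hence removed, and every other lattice point of $P_i$ attains a strictly smaller integer value of $v \cdot x$. Combined with Lemma~\ref{lem2}, which says $h(\cdot, v)$ takes at most $O(n\mu)$ distinct values, each fixed $v$ can be of category $0$ in at most $O(n\mu)$ steps. Summing over $V_\mu$ via Lemma~\ref{lem3}, the total count of (step, category-$0$ primitive) pairs over the entire peeling process is at most $|V_\mu| \cdot O(n\mu) = O(n \mu^4)$.

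To convert this into a bound on the number of thin steps, I need to show that in every thin step a $\Theta(1)$-fraction of $V_\mu$ is of category $0$, i.e. $a_1 + a_2 = o(|V_\mu|)$. Each symmetric pair of facets contributes at most one category-$2$ primitive, so $a_2 \leq f_2/2 = O(f_0) = O(k)$. For category-$1$, a primitive $v$ lying in the normal fan of an edge $e$ must be perpendicular to the primitive edge direction $d_e$, and so it sits in the rank-$2$ sublattice $d_e^\perp \cap \Z^3$; intersecting with the box $[0,\mu]^3$ gives at most $O(\mu^2)$ such primitives per edge. Since the normal fans of distinct edges are pairwise disjoint (and one only needs to sum over distinct edge directions), this yields $a_1 \leq O(f_1 \mu^2) = O(k \mu^2)$. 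Provided $k\mu^2 = o(\mu^3)$, we get $a_0 \geq |V_\mu|/2 = \Theta(\mu^3)$, and the thin-step bound becomes $O(n\mu^4)/\Theta(\mu^3) = O(n\mu)$.

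Finally I would set $\mu = n^{5/4}$ and $k = n^{3/4}$: this choice satisfies $k\mu^2 = n^{13/4} = o(n^{15/4}) = o(\mu^3)$, and gives $n^3/k = n^{9/4}$ for the fat contribution and $n\mu = n^{9/4}$ for the thin contribution, yielding the claimed $O(n^{9/4})$. The main technical obstacle is the bound $a_1 \leq O(f_1 \mu^2)$: one has to justify that at most $O(\mu^2)$ primitives of $V_\mu$ can lie in any given $2$-plane $d_e^\perp$ through the origin (which requires analyzing the sublattice $d_e^\perp \cap \Z^3$ inside $[0,\mu]^3$), and to verify that the disjointness of normal fans prevents the sum $\sum_e$ from over-counting.
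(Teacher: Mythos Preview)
Your argument is correct, and the technical obstacle you flag is not serious: for any primitive $d_e$, the sublattice $d_e^\perp \cap \Z^3$ has shortest nonzero vector of length at least $1$, so a packing argument (place disjoint radius-$\tfrac12$ disks around each lattice point inside the planar slice of $[0,\mu]^3$) gives the $O(\mu^2)$ bound immediately; the disjointness of the open normal cones of distinct edges then makes the sum $\sum_e$ an honest upper bound for $a_1$.

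However, your route is genuinely different from the paper's. The paper never introduces a fat/thin dichotomy and never needs the sublattice count for $a_1$. Instead it works \emph{per direction}: for each $v\in V_\mu$ it bounds the number of steps in category~$0$ by $\tfrac32 n\mu$ and in category~$1$ by $\tfrac34 n(n+1)\mu$ (using only the crude fact that an edge in $[n]^3$ has at most $n$ lattice points). Setting $M=2n^2\mu$, each $v$ is therefore of category~$2$ in at least $M/2$ of the first $M$ steps, whence $\sum_{i\le M} c_{2,i}\ge \tfrac{M}{2}|V_\mu|=\Theta(n^2\mu^4)$. Euler's formula ($f_0\ge f_2/2\ge c_{2,i}$) converts this into $\sum_{i\le M} f_0(P_i)\ge \Theta(n^2\mu^4)$, and $\mu=\Theta(n^{1/4})$ forces this to exceed $n^3$. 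So the two proofs are dual: the paper bounds, for each direction, the time spent in categories~$0$ and~$1$ and then double-counts category-$2$ pairs; you bound, for each thin step, the number of directions in categories~$1$ and~$2$ and double-count category-$0$ pairs. The paper's version is shorter and uses a tiny $\mu$; your version uses $\mu=n^{5/4}$ and more lattice geometry, but isolates the vertex-count threshold $k$ explicitly, which might be a useful handle if one later gains finer control over how often $f_0(P_i)$ is small.
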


\begin{proof}
Consider a given step of the peeling process of $[n]^3$ with the corresponding convex layer polyhedron $P$. Each vector $v \in V_\mu$ may be of category $0,1,$ or $2$, as introduced in Definition~\ref{def2}.

If $v$ is of category $0$, then the intersection of $P$ with either of its supporting planes of normal vector $v$ is just one vertex, which is deleted in the next step of the peeling process. Thus, the number of planes orthogonal to $v$ intersecting the remaining set decreases by at least 2.  By Lemma~\ref{lem2}, initially there are at most $3n\mu$ such planes, hence $v$ may be of category $0$ in at most $\frac 3 2 n\mu$ steps.

If $v$ is of category 1, we know that the intersection of $P$ with its supporting planes orthogonal to $v$ are edges of $P$. Being contained in $[n]^3$, these may not contain more than $n$ grid points. In each subsequent step, the two endpoints of the remaining edge is deleted, so it takes at most $(n+1)/2$ steps to remove the entire edge which is the  intersection of $P$ with its supporting plane. Therefore, by Lemma~\ref{lem2} again, any given $v$ may be of category $1$ in at most $\frac 3 4 n (n+1) \mu$ peeling steps.

Set $M=2n^2\mu$. Since the above arguments show that $v$ may be of category $0$ or $1$ in at most $n^2\mu$ peeling steps, $v$ must be of category $2$ in at least $M/2$ iterations among the first $M$ iterations of the peeling process.

Denote by $c_{k,i}$ the number of category $k$ directions in $V_\mu$ in the $i$th peeling step.

By Euler's polyhedron formula, $f_0(P_i)-f_1(P_i)+f_2(P_i)=2$ holds. Moreover, since $f_1(P_i) \geq 3/2 \, f_2(P_i)$, we also have $f_0(P_i) \geq f_2(P_i)/2$. Since for each category 2 primitive vector $v$ contained in $V_\mu$, there exists a pair of opposites facets of $P$ perpendicular to $v$, $f_2(P_i) \geq 2c_{2,i}$ and hence $f_0(P_i) \geq c_{2,i}$. By Lemma~\ref{lem3}, we reach the following chain of inequalities:
    \begin{equation} \label{eq1}
        \sum_{i=1}^{M} f_0(P_i) \geq \sum_{i=1}^{M} c_{2,i} \geq n^2\mu|V_\mu| \geq \gamma n^2\mu^4
    \end{equation}
with some positive constant $\gamma$.
Setting $\mu=\gamma^{-1/4}\,n^{1/4}$, we find that the total number of vertices of the convex hulls in the first $M$ iterations is at least $n^3$, which is the number of all grid points in $[n]^3$. This implies that the peeling process must terminate in at most  $M=2n^2\mu=O(n^{9/4})$ iterations.
\end{proof}

It should be noted that the above argument may not be applied to the higher dimensional cases, since the number of facets may be much more than the number of vertices (see the relevant Upper Bound Theorem by McMullen~\cite{M70}).

The main restricting factor on our bound is the worst case for the number of steps a primitive vector can be of category $1$, in that it is rare that some, if any, edges will contain $n$ vertices and take $n/2$ steps to fully remove. One expects that as the peeling process evolves, the convex layer polytope has typically only short edges (and, moreover, it converges to a ball). That would lead to an $O(n \mu)$ upper bound for the total number of steps in which a given direction may be of category~1. If that was true, $M$ could be set to be $\Theta(n \mu)$ in the above proof, leading to the desired tight upper bound $\Theta(n^{3/2})$ for the layer number $\tau([n]^3)$. This sketches a possible line of attack for Conjecture~\ref{conj1}.

Although we could not reach the upper bound of Conjecture~\ref{conj1}, we are still able to improve on the result of Theorem~\ref{thm2}. In the remaining part of the section, we present this strengthened bound. The main tool is the following general lemma.

\begin{lemma}\label{lemma4}
    For all $d\geq2$ there exists a positive constant $\alpha_d>0$ such that the number of primitive vectors in $V_\mu$ which are not perpendicular to any non-zero lattice vector of norm at most $\alpha_d  \mu^{1/d}$ is at least $\frac{1}{2}|V_\mu|$.
\end{lemma}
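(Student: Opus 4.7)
The plan is to bound the number of \emph{bad} primitive vectors $v \in V_\mu$, meaning those perpendicular to some nonzero $w \in \Z^d$ with $|w| \leq R := \alpha_d \mu^{1/d}$, and show this count is at most $\tfrac{1}{2}|V_\mu|$ provided $\alpha_d$ is chosen sufficiently small (depending only on $d$). Since $\langle v, w \rangle = 0$ if and only if $v$ is orthogonal to the primitive representative of $w$, I restrict the summation below to primitive $w$; the union bound then gives
\[
\#\{\text{bad } v\} \;\leq\; \sum_{\substack{w \in \Z^d \text{ primitive} \\ 0 < |w| \leq R}} N(w), \qquad N(w) := \#\{v \in V_\mu : \langle v, w \rangle = 0 \}.
\]

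The key step is to show $N(w) = O(\mu^{d-1}/|w|)$. I would establish this via two observations. First, for primitive $w$ the sublattice $\Lambda_w := \Z^d \cap w^\perp$ has rank $d-1$ and covolume exactly $|w|$ in $w^\perp$: the map $v \mapsto \langle v, w \rangle$ is a surjection $\Z^d \twoheadrightarrow \Z$ with kernel $\Lambda_w$, so picking $v_0 \in \Z^d$ with $\langle v_0, w \rangle = 1$ gives a splitting $\Z^d = \Lambda_w \oplus \Z v_0$; computing the volume of the parallelepiped spanned by a basis of $\Lambda_w$ together with $v_0$ yields $\mathrm{covol}(\Lambda_w) \cdot (1/|w|) = 1$. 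Second, the cross-section $w^\perp \cap [0,\mu]^d$ has $(d-1)$-dimensional volume $O(\mu^{d-1})$: projecting along $e_i$ for the index $i$ maximizing $|w_i|$ is a bijection onto a subset of $[0,\mu]^{d-1}$, with Jacobian $|w|/|w_i| \in [1, \sqrt{d}]$. A standard lattice-point bound in a $(d-1)$-dimensional convex body then yields $N(w) \leq |\Lambda_w \cap [0,\mu]^d| = O(\mu^{d-1}/|w|)$.

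Summing in spherical shells, the number of $w \in \Z^d$ with $|w| \in [k, k+1)$ is $O(k^{d-1})$, so $\sum_{0 < |w| \leq R} 1/|w| = O(R^{d-1})$. Consequently
\[
\#\{\text{bad } v\} \;=\; O\!\left(\mu^{d-1} R^{d-1}\right) \;=\; O\!\left(\alpha_d^{d-1} \mu^{(d^2-1)/d}\right) \;=\; O(\alpha_d^{d-1}) \cdot \mu^{-1/d} \cdot \mu^d.
\]
By Lemma~\ref{lem3} (supplemented, for $d = 2$, by the classical asymptotic $\sum_{k=1}^{\mu} \varphi(k) = \Theta(\mu^2)$), $|V_\mu| \geq c_d \mu^d$ for all $\mu$ sufficiently large; the ratio $\#\{\text{bad}\}/|V_\mu|$ is then $O(\alpha_d^{d-1}) \cdot \mu^{-1/d}$, which I can force below $1/2$ by choosing $\alpha_d$ small independent of $\mu$. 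Small-$\mu$ exceptions---including the range $R < 1$ where the conclusion is vacuous---are absorbed by shrinking $\alpha_d$ once more.

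The main obstacle I anticipate is the lattice-point bound $N(w) = O(\mu^{d-1}/|w|)$: while the heuristic ``\#lattice points $\approx$ volume/covolume'' is immediate, making the implied constant genuinely uniform in $w$ requires handling boundary effects carefully. An alternative fully elementary route is a congruence argument: after choosing $i$ with $|w_i|$ maximal, the lattice points of $[0, \mu]^d \cap w^\perp$ correspond to tuples $(v_j)_{j \neq i} \in ([0,\mu] \cap \Z)^{d-1}$ with $\sum_{j \neq i} v_j w_j \equiv 0 \pmod{|w_i|}$, and these constitute a $1/|w_i|$-fraction of all such tuples up to an $O((\mu+1)^{d-2})$ correction, which is sufficient and sidesteps any appeal to general lattice-counting theorems.
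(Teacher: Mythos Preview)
Your argument is correct and follows the same overall strategy as the paper: bound the number of bad $v\in V_\mu$ by a union bound over short lattice vectors $w$, estimating for each $w$ the count $N(w)=\#\{v\in V_\mu:\langle v,w\rangle=0\}$. The difference is in how sharply you estimate $N(w)$. The paper uses only the crude bound $N(w)\le (\mu+1)^{d-1}$, obtained by projecting $w^\perp\cap V_\mu$ along a coordinate axis $e_i\notin w^\perp$ into the cube $[0,\mu]^{d-1}$; combined with the trivial count $\#\{w:|w|\le\nu\}=O(\nu^d)$ this already gives $\#\{\text{bad}\}=O(\mu^{d-1}\nu^d)=O(\alpha_d^{\,d}\mu^d)$, which is at most $\tfrac12|V_\mu|$ once $\alpha_d$ is small. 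Your refined bound $N(w)=O(\mu^{d-1}/|w|)$ via the covolume of $\Lambda_w$ (or the congruence argument) yields the stronger conclusion that the bad fraction is $O(\alpha_d^{\,d-1}\mu^{-1/d})\to 0$, but this extra saving is not needed for the lemma and comes at the cost of the uniformity issues you yourself flag. In short: same skeleton, but the paper's version is shorter because the trivial projection bound already closes the argument.
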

\begin{proof}
    For a given $\nu>0$ to be specified later, we are to count the primitive vectors $v \in V_\mu$ such that the hyperplane $v^\perp$ does not contain any non-zero lattice point of $\Z^d$ with norm less than $\nu$.

    The set of these vectors in $V_\mu$ may be obtained by going over all short non-zero vectors, and dropping out all primitive vectors in $V_\mu$ that are perpendicular to it. For a given $w\in \Z^d$ with $|w|<\nu$, we claim that $w^\perp$ contains at most $\mu^{d-1}$ vectors in $V_\mu$. This follows from selecting a basis vector $e_i$ not contained in $w^\perp$,  and considering the projection of $w^\perp \cap V_\mu$ to $e_i^\perp$ along $e_i$. The projection mapping restricted to $w^\perp$ is one-to-one, it maps lattice points to lattice points, and the image is contained in the $(d-1)$-dimensional cube of edge length $\mu$. Therefore, the number of image points, and thus, the number of points in $w^\perp \cap V_\mu$,  is at most~$\mu^{d-1}$.

   Hence, for each short vector, the number of vectors deleted from  $V_\mu$ in the above process is at most  $\mu^{d-1}$. On the other hand, the number of vectors $w \in \Z^d$ with $|w|\leq \nu$ is at most $(2 \nu + 1)^d = \Theta(\nu^d)$. Thus, the total number of vectors deleted from $V_\mu$ is at most $O(\mu^{d-1} \nu^d)$. Since, by Lemma~\ref{lem3}, $|V_\mu| = \Theta(\mu^d)$, setting $\nu = \alpha_d \mu^{1/d}$ with an appropriate constant $\alpha_d$ (depending on $d$) guarantees that at most half of the vectors of $V_\mu$ are dropped out.
\end{proof}

Utilizing Lemma~\ref{lemma4} we are able to improve our upper bound on the layer number of the 3-dimensional grid.

\begin{theorem} \label{thm3}
    The layer number of $[n]^3$ is at most $O(n^{24/11})$.
\end{theorem}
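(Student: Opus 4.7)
The plan is to upgrade the proof of Theorem~\ref{thm2} by restricting attention to the \emph{generic} primitive directions supplied by Lemma~\ref{lemma4} and exploiting the fact that any lattice edge perpendicular to such a direction must contain few lattice points. Fix a scale $\mu$ to be optimized at the end. Call $v \in V_\mu$ \emph{generic} if it is not perpendicular to any non-zero lattice vector of norm at most $\alpha_3 \mu^{1/3}$; by Lemma~\ref{lemma4}, at least $|V_\mu|/2 = \Theta(\mu^3)$ vectors in $V_\mu$ are generic.

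If a generic $v$ is of category $1$ at some step $i$, then each of its two supporting hyperplanes meets $P_i$ in an edge whose primitive direction vector $w$ is a lattice vector perpendicular to $v$, so $|w| \geq \alpha_3 \mu^{1/3}$. Since the edge is contained in $[n]^3$ and its consecutive lattice points are spaced by $|w|$, it contains only $O(n/\mu^{1/3})$ lattice points. Repeating the bookkeeping of Theorem~\ref{thm2} with $n+1$ replaced by this sharper bound, a generic $v$ may be of category $1$ in at most $O(n\mu \cdot n/\mu^{1/3}) = O(n^2\mu^{2/3})$ peeling steps, and of category $0$ in at most $O(n\mu)$ steps. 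Setting $M = C n^2 \mu^{2/3}$ for a sufficiently large absolute constant $C$, every generic $v$ must be of category $2$ in at least $M/2$ of the first $M$ iterations.

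Invoking the Euler-formula inequality $f_0(P_i) \geq c_{2,i}$ from the proof of Theorem~\ref{thm2}, and counting pairs $(i,v)$ with $v$ generic and of category $2$ at step $i$ in two ways, gives
\begin{equation*}
\sum_{i=1}^{M} f_0(P_i) \;\geq\; \sum_{i=1}^{M} c_{2,i} \;\geq\; \frac{M}{2} \cdot \frac{|V_\mu|}{2} \;=\; \Omega\bigl(n^2 \mu^{11/3}\bigr),
\end{equation*}
whereas trivially $\sum_{i=1}^M f_0(P_i) \leq n^3$. Comparing the two forces $\mu = O(n^{3/11})$, and choosing $\mu = \Theta(n^{3/11})$ then yields $\tau([n]^3) \leq M = \Theta\bigl(n^2 \mu^{2/3}\bigr) = O(n^{24/11})$, as claimed.

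The main conceptual input is already packaged in Lemma~\ref{lemma4}; given it, the argument is a two-parameter optimization balancing the (now reduced) per-direction category~$1$ cost against the number of generic directions. The step requiring the most care is confirming that each edge of $P_i$ genuinely has a lattice direction vector governed by Lemma~\ref{lemma4}, and that the improved category~$1$ bound propagates cleanly through the aggregate sum. Any further gain in the exponent would likely require either sharpening the edge-length bound (for instance by showing that convex layer polytopes typically have mostly short edges, as speculated after Theorem~\ref{thm2}) or strengthening Lemma~\ref{lemma4} itself.
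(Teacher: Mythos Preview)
Your proof is correct and follows essentially the same route as the paper's own argument: restrict to the subset $V'_\mu$ of ``generic'' directions from Lemma~\ref{lemma4}, use the resulting lower bound on the lattice spacing along perpendicular edges to sharpen the category~1 estimate from $O(n^2\mu)$ to $O(n^2\mu^{2/3})$, and then rerun the counting argument of Theorem~\ref{thm2} with the new $M$. Two small presentational points: in your displayed inequality you should write the generic-direction count (the paper calls it $c'_{2,i}$) rather than $c_{2,i}$, since your lower bound $\tfrac{M}{2}\cdot\tfrac{|V_\mu|}{2}$ comes from the generic directions only; and the phrase ``forces $\mu = O(n^{3/11})$'' is logically inverted---$\mu$ is a parameter you choose, and the point is that taking $\mu = \Theta(n^{3/11})$ makes the lower bound exceed $n^3$, contradicting $\tau([n]^3) > M$.
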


\begin{proof}
    Let $V'_\mu$ be the set of vectors in $V_\mu$ specified in Lemma \ref{lemma4}. Select $v\in V'_\mu$ arbitrary. Then, any line $\ell$ perpendicular to $v$ may contain at most $s= \sqrt{3}n / (\alpha_3  \mu^{1/3}) + 1$ lattice points of $[n]^3$: the length of the segment $\ell \cap [0,n]^3$ is at most $\sqrt{3} n$, while the shortest nonzero lattice vector parallel to $\ell$ must be of norm at least~$\alpha_3  \mu^{1/3}$.

    Therefore, if $v$ is of category 1 in a given step of the peeling process with the corresponding convex layer polytope $P$, it takes at most $(s+1)/2$ steps to completely remove the edge which is the intersection $P$ and its supporting hyperplane perpendicular to $v$. Thus, by Lemma~\ref{lem2}, the number of steps in which $v$ is of category 1 is at most $N = 6 / \alpha_3 \cdot n^2 \mu^{2/3} $. The number of steps in which $v$ is of category 0 is at most $\frac 3 2 n \mu$.

    Set $M' = 2 (N + \frac 3 2 n \mu)$, and apply the same argument as in the proof of Theorem~\ref{thm2} for the first $M'$ steps of the peeling process, but replacing the set  $V_\mu$ of considered normal vectors by $V'_\mu$. Let $c'_{2,i}$ denote the number of category 2 directions in $V'_\mu$ in the $i$th step of the peeling process of $[n]^3$. Similarly to \eqref{eq1}, we arrive at the following chain of inequalities:
    \begin{equation*}
        \sum_{i=1}^{M'} f_0(P_i) \geq \sum_{i=1}^{M'} c'_{2,i} \geq \frac{M'}{2}|V'_\mu| = \Theta(n^2\mu^{2/3})\Theta(\mu^3) = \gamma' n^2\mu^{11/3}
    \end{equation*}
    with  some  positive  constant $\gamma'$. We finish the proof by setting $\mu=(\gamma')^{-3/11}\,n^{3/11}$, which by the same argument as before results in the upper bound $\tau([n]^3) \leq O(n^{24/11})$.
\end{proof}

\subsection{Upper bound for higher dimensional cases}
We conclude the paper by the extension of the bound of  Theorem~\ref{thm3} to higher dimensions using a simple recursive argument.

\begin{theorem}
For every $d \geq 3$, the layer number of the $d$-dimensional grid satisfies $\tau([n]^d) \leq O(n^{d-9/11})$.
\end{theorem}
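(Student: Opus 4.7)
The plan is to reduce to the three-dimensional case via a simple induction on $d$, establishing the recursive bound $\tau([n]^d) \leq n \cdot \tau([n]^{d-1})$. Since $d - 9/11$ increases by exactly $1$ when $d$ is incremented and the base case $d = 3$ is furnished by Theorem~\ref{thm3}, this recursion immediately propagates the desired estimate.

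The first ingredient is a monotonicity property of the layer number: if $Y \subseteq X \subset \R^d$ are finite sets, then $\tau(Y) \leq \tau(X)$. Writing $T_k(X)$ for the set of points of $X$ that survive the first $k$ peeling steps, I will prove $T_k(Y) \subseteq T_k(X)$ by induction on $k$, starting from $T_0(Y) = Y \subseteq X = T_0(X)$. The inductive step uses the elementary fact that if $y \in T_k(Y) \subseteq T_k(X)$ is extreme in the larger set $T_k(X)$, then it is automatically extreme in the subset $T_k(Y)$, so it is peeled off at step $k+1$ in the peeling of $Y$ as well.

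The geometric core of the argument is a slicing observation. Consider the uppermost slice $S = \{x_d = n\}$ in the peeling of $X_0 = [n]^d$. As long as $X_i$ contains any point with $x_d = n$, the hyperplane $S$ is a supporting hyperplane of $P_i$, and the top face $P_i \cap S$ coincides with the convex hull of $X_i \cap S$. Therefore a point in this slice is extreme in $P_i$ if and only if it is extreme in the $(d-1)$-dimensional point set $X_i \cap S$, so the slice evolves exactly as in an independent peeling of $[n]^{d-1}$. It follows that after $\tau([n]^{d-1})$ peeling steps the slice $S$ is empty. At that point the new topmost slice $\{x_d = j\}$ (with the largest surviving $j \leq n-1$) is some subset of $[n]^{d-1} \times \{j\}$, whose layer number is at most $\tau([n]^{d-1})$ by monotonicity; applying the slicing observation again, this slice is cleared in at most $\tau([n]^{d-1})$ further steps. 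Iterating this at most $n$ times peels away all of $[n]^d$, establishing $\tau([n]^d) \leq n \cdot \tau([n]^{d-1})$.

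Combining the recursion with the base case $\tau([n]^3) = O(n^{24/11})$ gives $\tau([n]^d) = O(n \cdot n^{d-1-9/11}) = O(n^{d-9/11})$ for all $d \geq 3$, as desired. I do not expect a serious obstacle: the one subtle point is that the slicing observation in its cleanest form requires the slice to lie on a supporting hyperplane of $P_i$, which is only guaranteed while the slice is the topmost face; the monotonicity lemma is precisely what is needed to restart the argument cleanly at each subsequent slice.
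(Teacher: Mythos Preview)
Your proof is correct and follows essentially the same approach as the paper's: induct on $d$, use monotonicity of $\tau$ under inclusion, and exploit the fact that on a supporting hyperplane the peeling process restricts to an independent $(d-1)$-dimensional peeling. The only cosmetic differences are that the paper peels away $\lceil n/2\rceil$ concentric cubic \emph{shells} (using all $2d$ boundary hyperplanes simultaneously) rather than $n$ parallel \emph{slices}, and leaves the monotonicity lemma implicit where you spell it out carefully.
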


\begin{proof} We will apply induction on $d$. The $d=3$ case is provided by Theorem~\ref{thm3}. We shall prove the $d$-dimensional case, assuming that the estimate is valid for $[n]^{d-1}$.

Consider any set $A \subset [n]^d$. Let $H$ be the hyperplane defined by the equation $x_1 = 1$, which is tangent to the cube $[1,n]^d$. The restriction of the ($d$-dimensional) peeling process of $A$ to $H$ is then equivalent to the $(d-1)$-dimensional peeling process of $A \cap H$. Since $A \cap H$ is contained in a copy of $[n]^{d-1}$, this must terminate in at most $O(n^{(d-1) - 9/11})$ steps, by the inductive hypothesis. The same reasoning may be applied to each of the $2d$ boundary hyperplanes of the cube $[1,n]^d$. Thus, we obtain that after $O(n^{(d-1) - 9/11})$ peeling steps of $A$, the remaining set will contain no points on the boundary of the cube $[1,n]^d$.

Now, we note that the grid $[n]^d$ may be written as the union of $\lceil n/2 \rceil$ cubic shells. Applying the above argument for each of these shells results in the upper bound $O(n^{d - 9/11})$ for the layer number of $[n]^d$.
\end{proof}

\section{Acknowledgements}

\noindent
This research was done under the auspices of the
Budapest Semesters in Mathematics program.

\end{document}